\newtheorem{theorem}{Theorem}[section]
\newtheorem{lemma}{Lemma}[section]
\newtheorem{cor}{Corollary}[section]
\numberwithin{equation}{section}
\theoremstyle{definition}
\theoremstyle{remark}
\begin{document}
\title{Finite Sections of Weighted Carleman's Inequality}
\author{Peng Gao}
\address{Department of Computer and Mathematical Sciences,
University of Toronto at Scarborough, 1265 Military Trail, Toronto
Ontario, Canada M1C 1A4} \email{penggao@utsc.utoronto.ca}
\date{June 30, 2007.}
\subjclass[2000]{Primary 26D15} \keywords{Carleman's inequality}


\begin{abstract}
 We study finite sections of weighted Carleman's inequality following the approach of De Bruijn. Similar to the unweighted case, we obtain an asymptotic expression for the optimal constant.
\end{abstract}

\maketitle
\section{Introduction}
\label{sec 1} \setcounter{equation}{0}
  
   The well-known Carleman's inequality asserts that for convergent infinite series $\sum a_n$ with non-negative terms, one has
\begin{equation*}
   \sum^\infty_{n=1}(\prod^n_{k=1}a_k)^{\frac 1{n}}
\leq e\sum^\infty_{n=1}a_n,
\end{equation*}
   with the constant $e$ best possible.

  There is a rich literature on many different proofs of Carleman's inequality as well as its generalizations and extensions. We shall refer the readers to the survey articles  \cite{P&S} and \cite{D&M} as well as the references therein for
an account of Carleman's inequality. 

  From now on we will assume $a_n \geq 0$ for $n \geq 1$ and any
   infinite sum converges. In \cite{G2}, the author studied the following weighted Carleman's inequality:
\begin{equation}
\label{1}
   \sum^\infty_{n=1}G_n
\leq U\sum^\infty_{n=1}a_n,
\end{equation}
  where
\begin{equation*}
   G_n=\prod^n_{k=1}a^{\lambda_k/\Lambda_n}_k, \hspace{0.1in} \Lambda_n=\sum^n_{k=1}\lambda_k, ~~\lambda_k \geq 0, ~~\lambda_1>0.
\end{equation*}
   Using Carleman's original approach in \cite{Carlman}, the author \cite{G2} proved the following:
\begin{theorem}
\label{thm1}
  Suppose that
\begin{equation}
\label{5}
  M=\sup_n\frac
    {\Lambda_n}{\lambda_n}\log \Big(\frac {\Lambda_{n+1}/\lambda_{n+1}}{\Lambda_n/\lambda_n} \Big ) < +\infty,
\end{equation}
  then inequality \eqref{1} holds with $U=e^M$.
\end{theorem}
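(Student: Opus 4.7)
My plan is to adapt Carleman's original multiplicative-weight trick to the weighted geometric mean $G_n$. Setting $\mu_n = \Lambda_n/\lambda_n$, the hypothesis \eqref{5} becomes $\mu_n\log(\mu_{n+1}/\mu_n) \leq M$ for every $n$. I would introduce auxiliary constants $c_k>0$ determined by requiring
\begin{equation*}
\prod_{j=1}^n c_j^{\lambda_j} = \mu_{n+1}^{\Lambda_n} \qquad \text{for every } n \geq 1,
\end{equation*}
which by taking ratios gives the explicit formula $c_k^{\lambda_k} = \mu_{k+1}^{\Lambda_k}/\mu_k^{\Lambda_{k-1}}$. Then
\begin{equation*}
G_n = \frac{1}{\mu_{n+1}}\prod_{j=1}^n (c_ja_j)^{\lambda_j/\Lambda_n},
\end{equation*}
and the weighted AM--GM inequality with weights $\lambda_j/\Lambda_n$ (which sum to $1$) yields
\begin{equation*}
G_n \leq \frac{1}{\mu_{n+1}\Lambda_n}\sum_{k=1}^n \lambda_kc_ka_k.
\end{equation*}

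The next step is to sum over $n$ and interchange the order of summation:
\begin{equation*}
\sum_{n=1}^\infty G_n \leq \sum_{k=1}^\infty \lambda_kc_ka_k \sum_{n=k}^\infty \frac{1}{\mu_{n+1}\Lambda_n}.
\end{equation*}
The reason for the particular choice of $c_k$ above is the partial-fraction identity
\begin{equation*}
\frac{1}{\mu_{n+1}\Lambda_n} = \frac{\lambda_{n+1}}{\Lambda_n\Lambda_{n+1}} = \frac{1}{\Lambda_n} - \frac{1}{\Lambda_{n+1}},
\end{equation*}
so (assuming $\Lambda_n\to\infty$, which one verifies from \eqref{5}) the inner series telescopes to $1/\Lambda_k$. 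Substituting $c_k$ and using $\Lambda_{k-1}/\lambda_k = \mu_k - 1$, the coefficient of $a_k$ simplifies to
\begin{equation*}
\frac{\lambda_kc_k}{\Lambda_k} = \left(\frac{\mu_{k+1}}{\mu_k}\right)^{\mu_k} = \exp\bigl(\mu_k\log(\mu_{k+1}/\mu_k)\bigr) \leq e^M,
\end{equation*}
which is exactly the desired estimate.

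The principal obstacle is to guess the correct ansatz $\prod_j c_j^{\lambda_j} = \mu_{n+1}^{\Lambda_n}$; the telescoping of the inner sum and the clean exponent $\mu_k$ emerging at the end together pin down this choice. Reassuringly, in the unweighted case $\lambda_k\equiv 1$, $\Lambda_n = n$ it reduces to Carleman's classical $\prod_j c_j = (n+1)^n$ and to the constant $e$. A secondary technicality is checking that $\Lambda_n\to\infty$ so the telescoping is legitimate, but this should follow from the finiteness of $M$ together with the monotonicity of $\Lambda_n$.
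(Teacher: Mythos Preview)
Your argument is correct and is precisely Carleman's original multiplicative-weight plus AM--GM plus telescoping trick, adapted to general weights. The paper does not actually reprove Theorem~\ref{thm1}; it merely quotes it from \cite{G2}, noting there that the proof follows ``Carleman's original approach in \cite{Carlman}''---which is exactly what you have written out. (The Lagrange-multiplier computation in Section~\ref{sec 2} of the paper is not a proof of Theorem~\ref{thm1}; it is preparatory material for the finite-section Theorem~\ref{thm2}, and only incidentally recovers the bound $\mu_N<e^M$.)

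One small remark: you do not actually need $\Lambda_n\to\infty$. The telescoping sum $\sum_{n\ge k}(1/\Lambda_n-1/\Lambda_{n+1})$ is in any case at most $1/\Lambda_k$, and that inequality is all your argument uses. Whether $\Lambda_n\to\infty$ genuinely follows from \eqref{5} alone is a little delicate (it does under the additional monotonicity hypotheses used later in the paper, but those are not assumed in Theorem~\ref{thm1}), so it is cleaner simply to drop that claim and use the one-sided telescoping bound.
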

 
   In this paper, we consider finite sections of weighted Carleman's inequality \eqref{1}:
\begin{equation}
\label{3}
   \sum^N_{n=1}G_n
\leq \mu_N\sum^N_{n=1}a_n.
\end{equation}   
   where $N \geq 1$ is any integer. In the case of $\lambda_k=1$ (the unweighted case), De Bruijn \cite{De} had shown that the best constant satisfies
\begin{equation*}
   \mu_N=e-\frac {2\pi^2e}{(\log N)^2}+ O \Big (\frac 1{(\log N)^3} \Big ).
\end{equation*}  

   It is our goal in this paper to obtain similar asymptotic expressions for $\mu_N$ for the weighted Carleman's inequality following De Bruijn's approach in \cite{De}. We shall prove the following
\begin{theorem}
\label{thm2}
  Assume \eqref{5} holds with $\{ \lambda_k \}^{\infty}_{k=1}$ a non-decreasing sequence satisfying 
\begin{eqnarray}
\label{1.3}
  \sup_k \frac {\lambda_{k+1}}{\lambda_k} &<& +\infty, \\
\label{1.4}
    M+\log (\lambda_k/\lambda_{k+1}) &\leq & \frac {\Lambda_{k+1}}{\lambda_k}\log \Big(\frac {\Lambda_{k+1}/\lambda_{k+1}}{\Lambda_k/\lambda_k} \Big ), \\
\label{1.5}
    \frac
    {\Lambda_k}{\lambda_k}\log \Big(\frac {\Lambda_{k+1}/\lambda_{k+1}}{\Lambda_k/\lambda_k} \Big ) &=& M+O\Big(\frac {\lambda_k}{\Lambda_k}  \Big ), \\
\label{3.5'}
  \frac {\lambda_k}{\Lambda_k} &=& \frac {C}{k}+O(\frac 1{k^2}), \hspace{0.1in} C>0, \\
\label{1.6}
  \inf_k\Big (\frac {\Lambda_{k+1}}{\lambda_{k+1}}-\frac {\Lambda_k}{\lambda_k} \Big ) &>& 0.
\end{eqnarray}
  Then for any integer $N \geq 1$, inequality \eqref{3} holds with the best constant satisfying:
\begin{equation*}
   \mu_N=e^M- \frac {2\pi^2e^M}{C^2(\log N)^2}+O\Big (\frac 1{(\log N)^3} \Big ).
\end{equation*}  
\end{theorem}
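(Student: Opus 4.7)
The plan is to adapt De Bruijn's extremal approach from \cite{De} to the weighted setting. Compactness of the simplex and continuity of $\sum_n G_n$ give existence of a maximizer $\{a_k^*\}_{k=1}^N$ for the problem of maximizing $\sum_{n=1}^N G_n$ subject to $\sum_{k=1}^N a_k = 1$. Since $\log G_n$ is linear in $(\log a_1,\ldots,\log a_n)$, the objective is log-concave on the positive orthant, all $a_k^*$ are strictly positive, and the Lagrange conditions read
\begin{equation*}
\mu_N\, a_k^* = \lambda_k \sum_{n=k}^{N} \frac{G_n^*}{\Lambda_n}, \qquad 1 \leq k \leq N.
\end{equation*}
Subtracting consecutive equations produces a two-term recurrence that, together with the boundary value at $k=N$, determines $\{a_k^*\}$ and hence $\mu_N$.

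For the upper bound I would combine the Lagrange identity with the weighted AM--GM inequality
\begin{equation*}
G_n \leq \Bigl(\prod_{k=1}^n c_k^{-\lambda_k/\Lambda_n}\Bigr)\,\frac{1}{\Lambda_n}\sum_{k=1}^n \lambda_k c_k a_k,
\end{equation*}
valid for any positive sequence $\{c_k\}$. The choice that recovers the constant $e^M$ of Theorem \ref{thm1} is the natural weighted generalization of Carleman's $c_k=(1+1/k)^k$, perturbed by an auxiliary exponent $1/r$ with $r=r(N)$ to be optimized. Hypothesis \eqref{1.4} is what keeps the corresponding terms non-negative, while \eqref{1.5} converts the inner sum after interchange of summations into $e^{M(1+O(1/r))}$ with a controllable error. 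This produces an upper bound of the form $\mu_N \leq e^M\bigl(1 - \alpha/r^2 + \beta\, N^{-\gamma/r}\bigr)\bigl(1 + O((\log N)^{-3})\bigr)$ for explicit $\alpha,\beta,\gamma$; minimizing over $r$ yields the desired asymptotic.

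For the matching lower bound I would use the trial sequence
\begin{equation*}
a_k = \Bigl(\frac{\Lambda_k}{\lambda_k}\Bigr)^{-1-1/r}\Bigl(\frac{\Lambda_{k+1}}{\lambda_{k+1}} - \frac{\Lambda_k}{\lambda_k}\Bigr),
\end{equation*}
the natural weighted analogue of De Bruijn's $a_k = k^{-1-1/r}$. Hypotheses \eqref{1.3} and \eqref{1.6} ensure that this sequence is well-separated and summable with the expected behavior, while \eqref{3.5'} converts $\sum_k a_k$ into a Riemann-type sum of size $r\cdot\mathrm{const} + O(1)$ and identifies $(\Lambda_N/\lambda_N)^{-1/r} \approx N^{-1/(Cr)}$. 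A direct computation of $G_n$ for this sequence, using \eqref{1.4} and \eqref{1.5}, gives $G_n/a_n = e^M\bigl(1 + O(\lambda_n/\Lambda_n)\bigr)$, so that the ratio $\sum G_n / \sum a_n$ reduces to the same one-parameter extremal problem as above, now bounded from below. The stationary point $r_* \sim c_0\log N$ is what produces the $(\log N)^{-2}$ correction, and the coefficient $2\pi^2/C^2$ emerges from the second-order Taylor expansion around $r_*$, with $C$ entering precisely via the logarithmic rate supplied by \eqref{3.5'}.

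The principal obstacle will be verifying that the upper and lower bounds agree up to $O((\log N)^{-3})$. The Lagrange recurrence is not exactly solvable, so one must show that the trial sequence is close enough to the true maximizer to control the error in $\mu_N$ at this second order; this requires pushing the asymptotic expansion used in Theorem \ref{thm1} one order further and showing that hypotheses \eqref{1.3}--\eqref{1.6} are precisely what is needed to kill the intermediate cross-terms. The constant $2\pi^2$ itself is ultimately produced by $\sum_{n\geq 1} n^{-2} = \pi^2/6$ (equivalently a Gaussian integral) appearing in the second-order expansion around $r_*$, exactly as in De Bruijn's unweighted argument.
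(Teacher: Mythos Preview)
Your proposal is not De Bruijn's method, and it is not the method the paper uses. Both De Bruijn and the present paper work \emph{directly with the Lagrange recurrence}, not with a one-parameter family of trial sequences optimized over~$r$. Concretely: after deriving the Lagrange system, the paper sets $h_k=\log(G_k/a_k)$ and obtains the nonlinear recursion
\[
h_{k+1}=\frac{\Lambda_k}{\Lambda_{k+1}}h_k-\frac{\Lambda_k}{\Lambda_{k+1}}\log\Bigl(\frac{\lambda_{k+1}}{\lambda_k}-\frac{\lambda_{k+1}}{\Lambda_k\mu}e^{h_k}\Bigr),
\]
defines the \emph{breakdown index} $N_\mu$ as the first $k$ where this recursion ceases to make sense, proves that $N_{\mu_N}=N$, and shows (via three technical lemmas) that the recursion is well-approximated by the differential equation $dh/d\log k = C(\mu^{-1}e^h-h+M-1)$. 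Summing the increments of $\theta(h_k)$, where $\theta(y)=\int_0^y(e^x/\mu-x+M-1)^{-1}\,dx$, gives $\theta(h_N)=C\log N+O(1)$; the integrand peaks near $h=\log\mu\approx M$, where it is approximated by $(\tfrac12(h-\log\mu)^2+\log(e^M/\mu))^{-1}$, and the resulting Gaussian integral yields
\[
\log N=\frac{\sqrt2\,\pi}{C}\bigl(\log(e^M/\mu)\bigr)^{-1/2}+O(1),
\]
which inverts to the claimed asymptotic. The constant $2\pi^2$ is $(\sqrt2\,\pi)^2$ and comes from $\int_{-\infty}^\infty(u^2/2+a)^{-1}\,du=\pi\sqrt2/\sqrt a$; it has nothing to do with $\zeta(2)=\pi^2/6$.

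Your variational scheme has a genuine gap at exactly this point. The upper bound you sketch, $e^M(1-\alpha/r^2+\beta N^{-\gamma/r})$, does not obviously optimize to $e^M-2\pi^2 e^M/(C^2(\log N)^2)$: minimizing in $r$ does not produce $2\pi^2$ without a mechanism that forces the quadratic-near-minimum structure of $e^{h-M}-h+M-1$ to appear, and your proposal gives none. Likewise, for the trial sequence $a_k=(\Lambda_k/\lambda_k)^{-1-1/r}(\Lambda_{k+1}/\lambda_{k+1}-\Lambda_k/\lambda_k)$ you assert $G_n/a_n=e^M(1+O(\lambda_n/\Lambda_n))$, but this is the first-order statement underlying Theorem~\ref{thm1}; it is not sharp enough to extract a correction at order $(\log N)^{-2}$, and you do not indicate how the second-order expansion would close. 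If such a test-sequence argument can be made to match at order $(\log N)^{-2}$ it would be interesting, but as written the proposal neither follows De Bruijn nor supplies the missing second-order analysis, and the stated origin of the constant $2\pi^2$ is incorrect.
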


   We note here that \eqref{1.6} implies $M>0$, which we shall use without further mentioning throughout the paper. We may also assume $N \geq 2$ from now on.
\section{Preliminary Treatment}
\label{sec 2} \setcounter{equation}{0}
   It is our goal in this section to give an upper bound for the number $U_N$ appearing in \eqref{3}.  We first recall the author's approach in \cite{G2} (following that of Carleman in \cite{Carlman}) for determining the maximum value $\mu_N$ of $\sum^N_{n=1}G_n$ in \eqref{3} subject to the constraint $\sum^N_{n=1}a_n=1$ using Lagrange multipliers. It is easy to see that we may assume $a_n > 0$ for all $1 \leq n \leq N$ when the maximum is reached. We now define
\begin{equation*}
  F({\bf a}; \mu)=\sum^N_{n=1}G_n-\mu (\sum^N_{n=1}a_n-1),
\end{equation*}
  where ${\bf a}=(a_n)_{1 \leq n \leq N}$. By the Lagrange method, we have to solve $\nabla F=0$, or the following system of equations:
\begin{equation}
\label{2.1}
  \mu a_k=\sum^N_{n=k}\frac {\lambda_kG_n}{\Lambda_n}, \hspace{0.1in} 1 \leq k \leq N; \hspace{0.1in} \sum^N_{n=1}a_n=1.
\end{equation}
   We note that on summing over $1 \leq k \leq N$ of the first $N$ equations above, we get
\begin{equation*}
  \sum^N_{n=1}G_n=\mu.
\end{equation*}
   Hence we have $\mu=\mu_N$ in this case which allows us to recast the equations \eqref{2.1} as:
\begin{equation*}
  \mu_N \frac {a_k}{\lambda_k}=\sum^N_{n=k}\frac {G_n}{\Lambda_n}, \hspace{0.1in} 1 \leq k \leq N; \hspace{0.1in} \sum^N_{n=1}a_n=1.
\end{equation*}
  On subtracting consecutive equations, we can rewrite the above system of equations as:
\begin{equation*}
  \mu_N (\frac {a_k}{\lambda_k}-\frac {a_{k+1}}{\lambda_{k+1}})=\frac {G_k}{\Lambda_k}, \hspace{0.1in} 1 \leq k \leq N-1; \hspace{0.1in}  \mu_N \frac {a_N}{\lambda_N}=\frac {G_N}{\Lambda_N}; \hspace{0.1in} \sum^N_{n=1}a_n=1.
\end{equation*}
   
   Now following the notations in \cite{De}, we define  for $1 \leq k \leq N-1$ (this is different from the treatment in \cite{G2}),
\begin{equation*}
  h_k = \log \frac {G_k}{a_k},
\end{equation*}
  so that we can obtain a recursion expressing $h_{k+1}$ in terms of $h_k$ as follows:
\begin{equation*}
  h_{k+1}=\frac {\Lambda_k}{\Lambda_{k+1}}h_k-\frac {\Lambda_k}{\Lambda_{k+1}}\log \Big (\frac {\lambda_{k+1}}{\lambda_{k}}- \frac {\lambda_{k+1}}{\Lambda_{k}\mu_N}e^{h_k} \Big ),  \hspace{0.1in} 1 \leq k \leq N-1.
\end{equation*}
   We now define a sequence of real functions $h_k(\mu)$ inductively by setting $h_1(\mu)=0$ and
\begin{equation}
\label{2.2}
   h_{k+1}(\mu)=\frac {\Lambda_k}{\Lambda_{k+1}}h_k(\mu)-\frac {\Lambda_k}{\Lambda_{k+1}}\log \Big (\frac {\lambda_{k+1}}{\lambda_{k}}- \frac {\lambda_{k+1}}{\Lambda_{k}\mu}e^{h_{k}(\mu)} \Big ),  \hspace{0.1in} 1 \leq k \leq N-1.
\end{equation}
   We note that $h_k(\mu_N)= h_k$ for $1 \leq k \leq N-1$ and 
\begin{eqnarray*}
   h_N(\mu_N) &=& \frac {\Lambda_{N-1}}{\Lambda_{N}}h_{N-1}(\mu_N)-\frac {\Lambda_{N-1}}{\Lambda_{N}}\log \Big (\frac {\lambda_{N}}{\lambda_{N-1}}- \frac {\lambda_{N}}{\Lambda_{N-1}\mu_N}e^{h_{N-1}(\mu_N)} \Big ) \\
 &=& \frac {\Lambda_{N-1}}{\Lambda_{N}}\log \Big (\frac {G_{N-1}}{a_{N-1}}\Big ) \\
 && -\frac {\Lambda_{N-1}}{\Lambda_{N}}\log \Big (\frac {\lambda_{N}}{\lambda_{N-1}}- \frac {\lambda_{N}}{\Lambda_{N-1}\mu_N}\Big (\mu_N \Big(\frac {\Lambda_{N-1}}{\lambda_{N-1}}-\frac {\Lambda_{N-1}}{\lambda_{N}}\frac {a_{N}}{a_{N-1}}\Big)\Big ) \Big ) \\
 &=& \frac {\Lambda_{N-1}}{\Lambda_{N}}\log \Big (\frac {G_{N-1}}{a_{N}}\Big )
  =\log \Big (\frac {G_{N}}{a_{N}}\Big )=\log (\frac {\mu_N\Lambda_N}{\lambda_N}).
\end{eqnarray*}
   
   We now show by induction that if $\mu \geq e^M$, then for any $2 \leq k \leq N$,
\begin{equation}
\label{2.3}
   h_k(\mu) \leq M\frac {\Lambda_{k-1}}{\Lambda_k}.
\end{equation}
   As we have seen above that $h_N(\mu_N)=\log (\mu_N\Lambda_N/\lambda_N)  \geq \log \mu_N \geq M$ when $\mu_n \geq e^M$, this forces $\mu_N < e^M$.

   Now, to establish \eqref{2.3}, we first consider the case $k=2$. As $h_1=0$, We have by \eqref{2.2},
\begin{equation}
\label{2.4}
  h_2(\mu)=-\frac {\Lambda_1}{\Lambda_{2}}\log \Big (\frac {\lambda_{2}}{\lambda_{1}}- \frac {\lambda_{2}}{\Lambda_{1}\mu} \Big ).
\end{equation}
  It is easy to see that $h_1(\mu) \leq M\Lambda_1/\Lambda_2$ is equivalent to
\begin{equation*}
  \frac {\lambda_2}{\lambda_{1}}e^M \geq \frac {\lambda_2}{\Lambda_1}\frac {e^M}{\mu}+1.
\end{equation*}
  As $e^M/\mu \leq 1$, the above inequality follows easily from the assumption \eqref{5}. Now assume inequality \eqref{2.3} holds for $k \geq 2$, then by \eqref{2.2} again, it is easy to see that for \eqref{2.3} to hold for $k+1$, it suffices to show that
\begin{equation*}
  \frac {\lambda_{k+1}}{\lambda_{k}}e^{M\lambda_k/\Lambda_k} \geq \frac {\lambda_{k+1}}{\Lambda_k}\frac {e^M}{\mu}+1,
\end{equation*}
   and this again follows easily from the assumption \eqref{5}.

\section{The Breakdown Index}
\label{sec 3} \setcounter{equation}{0}
    As in \cite{De}, we now try to evaluate $h_k(\mu)$ consecutively from \eqref{2.2} for any $\mu>0$, starting with $h_1=0$. Certainly we are only interested in the real values of $h_k$ and hence we say that the procedure breaks down at the first $k$ where $\lambda_{k+1}/\lambda_k-\lambda_{k+1}/(\Lambda_{k}\mu)e^{h_{k}(\mu)} \leq 0$, or equivalently, 
\begin{equation}
\label{3.1}
 h_k(\mu) \geq \log (\mu \Lambda_k/\lambda_k).
\end{equation} 
   We define the breakdown index $N_{\mu}$ as the smallest $k$ for which inequality \eqref{3.1} holds if there is such a $k$ and we put $N_{\mu} = +\infty$ otherwise. Thus for all $\mu>0$ we can say that $h_k(\mu)$ is defined for all $k \leq N_{\mu}$.

    Note that \eqref{2.3} implies $N_{\mu} = +\infty$ when $\mu \geq e^M$.  So from now on we may assume $0 < \mu < e^M$ and it is convenient to have some monotonicity properties available in this case. We have $h_1(\mu)=0$ for $0 < \mu < e^M$ and we let $\mu_1$ be the largest $\mu$ for which inequality \eqref{3.1} holds for $k=1$, this implies $\mu_1=1$. Now $h_2(\mu)$ is defined for $\mu > \mu_1$, and $h_2(\mu)$ is given by \eqref{2.4}, which is a decreasing function of $\mu$ for $\mu > \mu_1$. Note also that the right-hand side expression of inequality \eqref{3.1} is an increasing function of $\mu$ for any fixed $k$. It follows that
\begin{equation*}
  \lim_{\mu \rightarrow \mu_1^+}h_2(\mu)=+\infty; \hspace{0.1in} h_2(e^M) \leq M(1-\lambda_{2}/\Lambda_2) < \log (e^M\Lambda_2/\lambda_2) \leq \log (\mu \Lambda_2/\lambda_2).
\end{equation*}
   Thus there is exactly one value of $\mu < e^M$ for which inequality \eqref{3.1} holds with equality for $k=2$ and we define this value of $\mu$ to be $\mu_2$. This procedure can be continued. At each step we argue that $h_k(\mu)$ is defined and decreasing for $\mu > \mu_{k-1}$, that 
\begin{equation*}
  \lim_{\mu \rightarrow \mu_{k-1}^+}h_k(\mu)=+\infty; \hspace{0.1in} h_k(e^M) \leq M(1-\lambda_{k}/\Lambda_k) < \log (e^M\Lambda_k/\lambda_k) \leq \log (\mu \Lambda_k/\lambda_k).
\end{equation*}
   We then infer that $\mu_k$ is uniquely determined by $h_k(\mu) = \log (\mu \Lambda_k/\lambda_k)$. Moreover, $h_{k+1}(\mu)$ is again defined and decreasing for $\mu > \mu_{k}$ as both terms on the right of \eqref{2.2} are decreasing functions of $\mu$.
  
   Thus by induction we obtain that
\begin{equation}
\label{3.1'}
   1=\mu_1 < \mu_2 < \mu_3 < \ldots < e^M,
\end{equation}
   and that $h_{k+1}(\mu)$ is defined and decreasing for $\mu > \mu_{k}$. Moreover, $h_k(\mu) >  \log (\mu \Lambda_k/\lambda_k)$ if $\mu_{k-1} < \mu < \mu_k$, 
$h_k(\mu_k)=\log (\mu_k \Lambda_k/\lambda_k)$, $h_k(\mu) <  \log (\mu_k \Lambda_k/\lambda_k)$ if $ \mu > \mu_k$.

  It follows that the breakdown index $N_{\mu}$ equals $1$ if $\mu \leq \mu_1$, $2$ if $\mu_1 < \mu \leq \mu_2$, etc. We remark here that for fixed $\mu \leq e^M$, the $h_k(\mu)$'s are non-negative and increase as $k$ increases from $1$ to $N_k$. This follows from \eqref{2.2} by noting that
\begin{equation}
\label{3.2}
  h_{k+1}(\mu)-h_k(\mu)=-\frac {\lambda_k}{\Lambda_{k+1}}h_k(\mu)-\frac {\Lambda_k}{\Lambda_{k+1}}\log \Big (\frac {\lambda_{k+1}}{\lambda_{k}}- \frac {\lambda_{k+1}}{\Lambda_{k}\mu}e^{h_{k}(\mu)} \Big ).
\end{equation}
   It thus suffices to show the right-hand side expression above is non-negative. Equivalently, this is $\lambda_{k+1}/\lambda_{k} \leq f(h_k(\mu))$, where
\begin{equation*}
 f(x)= \frac {\lambda_{k+1}}{\Lambda_{k}\mu}e^{x}+ e^{-\frac {\lambda_k}{\Lambda_{k}}x} .
\end{equation*}
  It is easy to see that $f(x)$ is minimized at $x_0=\Lambda_k/\Lambda_{k+1}\log(\lambda_k\mu/\lambda_{k+1})$. Note also that
\begin{equation*}
 \frac {\lambda_{k+1}}{\Lambda_{k}\mu}e^{x_0}=\frac {\lambda_k}{\Lambda_{k}} e^{-\frac {\lambda_k}{\Lambda_{k}}x_0}.
\end{equation*}
   It follows that
\begin{equation*}
 f(x) \geq f(x_0)=\frac {\Lambda_{k+1}}{\Lambda_{k}} e^{-\frac {\lambda_k}{\Lambda_{k}}x_0}.
\end{equation*}
   It follows from \eqref{1.4} that
\begin{equation*}
    \log (\lambda_ke^M/\lambda_{k+1})=M+\log (\lambda_k/\lambda_{k+1}) \leq \frac {\Lambda_{k+1}}{\lambda_k}\log \Big(\frac {\Lambda_{k+1}/\lambda_{k+1}}{\Lambda_k/\lambda_k} \Big ).
\end{equation*}
  It is easy to see that the above inequality implies that $f(x_0) \geq \lambda_{k+1}/\lambda_{k}$ so that the $h_k(\mu)$'s increase as $k$ increases from $1$ to $N_k$. 

  The breakdown condition \eqref{3.1} is slightly awkward. We now replace it by a simpler one, for example, $h_k > \max (2,2M)$, by virtue of the following argument. Let $0 < \mu < e^M$ and assume that $N$ is such that $h_N > \max (2, 2M)$. Note that \eqref{3.5'} implies that $\lim_{k \rightarrow +\infty}\Lambda_k/\lambda_k = +\infty$ so that the right-hand side expression of \eqref{3.1} approaches $+\infty$ as $k$ tends to $+\infty$. Hence we may assume $N_{\mu} \geq N$ without loss of generality. Then we have
\begin{equation*}
  \log N_{\mu}- \log N =O(1).
\end{equation*}
   For, if $N \leq k \leq N_{\mu}$, the right-hand side of \eqref{3.2} equals
\begin{eqnarray}
 && -\frac {\lambda_k}{\Lambda_{k+1}}h_k(\mu)-\frac {\Lambda_k}{\Lambda_{k+1}}\log \Big (\frac {\lambda_{k+1}}{\lambda_{k}} \Big) -\frac {\Lambda_k}{\Lambda_{k+1}} \log \Big (1-\frac {\lambda_{k}}{\Lambda_{k}\mu}e^{h_{k}(\mu)} \Big )  \nonumber \\
\label{4.0}
&=& -\frac {\Lambda_k}{\Lambda_{k+1}}\log \Big (\frac {\lambda_{k+1}}{\lambda_{k}} \Big) 
+\frac {\lambda_k}{\Lambda_{k+1}}\Big (\frac {e^{h_{k}(\mu)}}{\mu}-h_k(\mu) \Big )+ \frac {\Lambda_{k}}{\Lambda_{k+1}}\sum^{+\infty}_{i=2}\frac 1{i}\Big ( \frac {\lambda_{k}}{\Lambda_{k}}\frac {e^{h_{k}(\mu)}}{\mu} \Big )^i   \\
& \geq & \frac {\lambda_k}{\Lambda_{k+1}}\Big (\frac {e^{h_{k}(\mu)}}{\mu}-h_k(\mu)-\frac {\Lambda_k}{\lambda_{k}}\log \Big (\frac {\lambda_{k+1}}{\lambda_{k}} \Big)  \Big ). \nonumber
\end{eqnarray}
   Note that, in view of \eqref{5} and \eqref{1.5},
\begin{equation}
\label{4.10}
   \frac {\Lambda_k}{\lambda_{k}}\log \Big (\frac {\lambda_{k+1}}{\lambda_{k}} \Big) =-\frac
    {\Lambda_k}{\lambda_k}\log \Big(\frac {\Lambda_{k+1}/\lambda_{k+1}}{\Lambda_k/\lambda_k} \Big )+\frac {\Lambda_k}{\lambda_{k}}\log \Big (\frac {\Lambda_{k+1}}{\Lambda_{k}} \Big) =1-M+O\Big(\frac {\lambda_k}{\Lambda_k}  \Big ).
\end{equation}
  As $(e^{h-M}-h+M-1)h^{-2}$ increases for $h \geq \max (2, 2M)$, we conclude that there exists a constant $C_0>0$ and an integer $N_0$ independent of $\mu$ such that for $k \geq N_0$,
\begin{eqnarray*}
  h_{k+1}(\mu)-h_k(\mu) &\geq & \frac {\lambda_{k}}{\Lambda_{k+1}}\Big(e^{h_{k}(\mu)-M}-h_k(\mu)+M-1 \Big)+O\Big(\frac {\lambda^2_k}{\Lambda_k\Lambda_{k+1}}  \Big ) \\
& >& \frac {C_0\lambda_{k}}{\Lambda_{k+1}}h^2_k(\mu).
\end{eqnarray*}
  We may assume $N \geq N_0$ from now on without loss of generality and we now simply the above relations by defining $d_N, d_{N+1}, \ldots$, starting with $d_N=h_N$, and
\begin{equation}
\label{3.3}
  d_{k+1}-d_k= \frac {C_0\lambda_{k}}{\Lambda_{k+1}}d^2_k.
\end{equation}
  
   Obviously we have $d_k \leq h_k \leq \log (\mu \Lambda_k/\lambda_k)$ for $N \leq k \leq N_{\mu}$. We use the bound
\begin{equation*}
   \log (\mu \Lambda_k/\lambda_k) \leq M+\log (\Lambda_k/\lambda_k) \leq M-1+\Lambda_k/\lambda_k \leq M\Lambda_k/\lambda_k.
\end{equation*}
  to get that $d_k \leq M\Lambda_k/\lambda_k$ for $N \leq k \leq N_{\mu}$. It follows from \eqref{3.3} that 
\begin{equation*}
  d_{k+1}-d_k \leq C_0Md_k.
\end{equation*}
   The above implies that we have $d_{k+1} \leq (C_0M+1)d_k$ for $N \leq k \leq N_{\mu}$ and \eqref{3.3} further implies that
\begin{equation}
\label{3.5}
   d_{k+1}-d_k \geq \frac {C_0\lambda_{k}}{(C_0M+1)\Lambda_{k+1}}d_kd_{k+1}.
\end{equation}
   We now apply \eqref{3.5'} to obtain via \eqref{3.5} that there exists a constant $C_1>0$ and an integer $N_1$ independent of $\mu$ such that for $k \geq N_1$, 
\begin{equation*}
   d^{-1}_{k}-d^{-1}_{k+1} \geq \frac {C_1}{k+1}.
\end{equation*}
   Certainly we may assume $N \geq N_1$ as well. Summing the above for $N \leq k \leq N_{\mu}-1$ yields:
\begin{equation*}
   \frac {1}{\max (2, 2M)} \geq d^{-1}_{N} \geq \sum_{N \leq k \leq N_{\mu}-1}\frac {C_1}{k+1}.
\end{equation*}
   It follows from this that
\begin{equation}
\label{3.6}
  \log N_{\mu}- \log N=-\sum_{N \leq k \leq N_{\mu}-1}\log (\frac {k}{k+1})\leq \sum_{N \leq k \leq N_{\mu}-1}\frac {1}{k+1}+O(1)=O(1).
\end{equation}

   We shall see in what follows that the relation \eqref{3.6} implies that there is no harm studying $\log N$ in stead of $\log N_{\mu}$. So from now on we shall concentrate on finding the smallest $k$ such that $h_k(\mu) > \max (2, 2M)$.
\section{Heuristic Treatment}
\label{sec 4} \setcounter{equation}{0}
   Our problem is, roughly, to determine how many steps we have to take in our recurrence \eqref{3.2} in order to push $h_k$ beyond the value of $\max (2, 2M)$,  assuming that $\mu$ is fixed, $\mu < e^M$ and $\mu$ close to $e^M$. 
Now assume we are able to neglect all the other terms of the right-hand side expression in \eqref{4.0} other than the first two terms, then we have a recurrence which can be written as
\begin{equation*}
  \Delta h=\frac {\lambda_k}{\Lambda_{k+1}}\Big (\frac {e^{h_{k}(\mu)}}{\mu}-h_k(\mu)-\frac {\Lambda_k}{\lambda_{k}}\log \Big (\frac {\lambda_{k+1}}{\lambda_{k}} \Big)  \Big ).
\end{equation*}
  In view of \eqref{4.10}, we may replace the last term above by $1-M$ and we may further consider the following recurrence using \eqref{3.5'}:
\begin{equation*}
  \Delta h=\frac {C}{k+1}\Big (\frac {e^{h_{k}(\mu)}}{\mu}-h_k(\mu)+M-1 \Big ).
\end{equation*}
   Next we consider $k$ as a continuous variable, and we replace the above by the corresponding differential equation, that is, we replace $\Delta h$ by $dh/dk$. Then we get
\begin{equation*}
  \frac {d \log (k+1)}{dh}=C^{-1}\Big (\mu^{-1}e^{h}-h+M-1 \Big )^{-1}.
\end{equation*}
   This suggests that if $N$ is the number of steps necessary to increase $h$ from $0$ to about $\max (2, 2M)$, then $\log N$ is roughly equal to
\begin{equation}
\label{4.1}
   \frac 1{C}\int^{\max (2, 2M)}_0\frac {dh}{\mu^{-1}e^{h}-h+M-1}.
\end{equation}
   The integrand has its maximum at $h=\log \mu$, and this is close to $M$. In the neighborhood of that maximum it can be approximated by
\begin{equation*}
  \frac 1{2}(h-\log \mu)^2+M-\log \mu.
\end{equation*}
   Therefore the value of \eqref{4.1} can be compared with
\begin{equation*}
   \frac 1{C}\int^{+\infty}_{-\infty}\frac {dh}{ \frac 1{2}(h-\log \mu)^2+M-\log \mu}=\frac {\sqrt{2}\pi}{C}\Big (\log (e^M/\mu) \Big )^{-1/2}.
\end{equation*}
   From this we see that for $\mu < e^M, \mu \rightarrow e^M$, we expect to have
\begin{equation}
\label{4.2}
  \log N_{\mu}=\frac {\sqrt{2}\pi}{C}\Big (\log (e^M/\mu ) \Big )^{-1/2}+O(1).
\end{equation}
   From this we see that if $ \mu \rightarrow e^M$, then $\log N_{\mu}$ tends to infinity. This also implies that for the sequence $\{ \mu_k \}$ defined as in \eqref{3.1'}, one must have $\lim_{k \rightarrow +\infty}\mu_k = e^M$. For otherwise, the sequence $\{ \mu_k \}$ is bounded above by a constant $<e^M$ and on taking any $\mu$ greater than this constant (and less than $e^M$), then the left-hand side of \eqref{4.2} becomes infinity (by our definition of $N_{\mu}$) but the right-hand side of \eqref{4.2} stays bounded, a contradiction. 

   Note that if $\mu=\mu_N$, then $N_{\mu}=N$, it follows from \eqref{4.2} that
\begin{equation*}
  \log (e^M/\mu_N)= \frac {2\pi^2}{C^2}\Big ( \log N+O(1) \Big )^{-2}.
\end{equation*}
   It is easy to see that the above leads to the following asymptotic expression for $\mu_N$:
\begin{equation*}
  \mu_N =e^M- \frac {2\pi^2e^M}{C^2(\log N)^2}+O\Big (\frac 1{(\log N)^3} \Big ).
\end{equation*}

   There are various doubtful steps in our argument above, but the only one that presents a serious difficulty is the omitting of all the other terms of the right-hand side expression of \eqref{4.0}. Certainly those terms can be expected to give only a small contribution if $k$ is large but the question is whether this contribution is small compared to $\mu^{-1}e^{h}-h+M-1$. The latter expression can be small if both $h_k-M$ and $\mu-e^M$ are small, and it is especially in that region that the integrand of \eqref{4.1} produces its maximal effect.

\section{Lemmas}
\label{sec 5} \setcounter{equation}{0}
\begin{lemma}
\label{lem5.1}
  For any given number $\eta>0, 0< \epsilon <M$, one can find an integer $k_0 >\eta$ and a number $\beta$, $e^{M-1}<\beta<e^M$ such that for $\beta < \mu \leq e^M$, 
\begin{equation}
\label{5.00}
 M-\epsilon < h_{k_0}(\mu) < \log \mu-\frac {M}{2} \frac {\lambda_{k}}{\Lambda_k}. 
\end{equation} 
\end{lemma}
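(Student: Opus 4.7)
The plan is to pick $k_0$ large enough that $h_{k_0}(e^M)$ is already within $\epsilon$ of $M$, and then to use the monotonicity of $\mu \mapsto h_{k_0}(\mu)$ (decreasing on $(\mu_{k_0-1},+\infty)$) together with continuity to transfer both inequalities to a range $(\beta, e^M]$ just below $e^M$. The whole strategy rests on the claim $\lim_{k\to\infty} h_k(e^M) = M$, which is where the real work lies.

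To establish this limit, I would first observe that $\{h_k(e^M)\}$ is non-decreasing in $k$ (by the monotonicity discussion in Section~\ref{sec 3}) and bounded above by $M(1-\lambda_k/\Lambda_k) < M$ (by \eqref{2.3}), so it converges to some $L \in [0, M]$. To rule out $L < M$, I would specialize the expansion \eqref{4.0} of the recurrence \eqref{3.2} to $\mu = e^M$ and invoke \eqref{4.10} to rewrite it as
\[
h_{k+1}(e^M) - h_k(e^M) = \frac{\lambda_k}{\Lambda_{k+1}}\bigl(e^{h_k(e^M)-M} - h_k(e^M) + M - 1\bigr) + O\!\left(\frac{\lambda_k^2}{\Lambda_k\Lambda_{k+1}}\right),
\]
where the error term is meant to absorb both the remainder in \eqref{4.10} and the $i \geq 2$ tail of the logarithmic series (the latter controlled by $e^{h_k - M} \leq 1$ together with the smallness of $\lambda_k/\Lambda_k$). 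Since $g(x) := e^{x-M} - x + M - 1$ is nonnegative and vanishes only at $x = M$, $L < M$ would force $g(L) > 0$, hence $h_{k+1}(e^M) - h_k(e^M) \geq c\,\lambda_k/\Lambda_{k+1}$ for all sufficiently large $k$; summing and using $\lambda_k/\Lambda_k \sim C/k$ from \eqref{3.5'} drives $h_k(e^M)$ past any finite bound, contradicting $L \leq M$.

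With the limit secured, given $\eta > 0$ and $0 < \epsilon < M$ I would choose $k_0 > \eta$ large enough that $h_{k_0}(e^M) > M - \epsilon$; since $h_{k_0}(\mu)$ is decreasing in $\mu$ on $(\mu_{k_0-1}, +\infty)$, the lower bound $h_{k_0}(\mu) > M - \epsilon$ then holds automatically for every $\mu \in (\mu_{k_0-1}, e^M]$. For the upper bound, \eqref{2.3} applied at $\mu = e^M$ yields $h_{k_0}(e^M) \leq M - M\lambda_{k_0}/\Lambda_{k_0}$, which is strictly less than $\log(e^M) - (M/2)\lambda_{k_0}/\Lambda_{k_0}$; continuity of $h_{k_0}(\mu)$ and $\log\mu$ in $\mu$ then gives a one-sided neighborhood $(\beta, e^M]$ of $e^M$ on which the strict inequality $h_{k_0}(\mu) < \log\mu - (M/2)\lambda_{k_0}/\Lambda_{k_0}$ persists, and I can enlarge $\beta$ if necessary so that $\beta > \max(e^{M-1}, \mu_{k_0-1})$. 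The hard part is the first step: making the $O$-term in the expansion genuinely negligible compared with $\lambda_k/\Lambda_{k+1}$ requires uniform control of the error in \eqref{4.10} together with careful handling of the tail of the logarithmic series, so that the leading term in the expansion cannot vanish and the divergence argument really goes through.
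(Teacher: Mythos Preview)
Your proposal is correct and follows essentially the same approach as the paper: both arguments establish that $h_{k_0}(e^M)$ eventually exceeds $M-\epsilon$ via the same divergence argument (expand the recurrence using \eqref{4.0} and \eqref{4.10}, note that $e^{h-M}-h+M-1$ is bounded below by a positive constant when $h\le M-\epsilon$, and sum using \eqref{3.5'}), then use the bound \eqref{2.3} for the upper inequality and pass to a neighborhood of $e^M$. The only cosmetic differences are that you package the first step as the stronger statement $\lim_k h_k(e^M)=M$ and invoke monotonicity in $\mu$ (rather than just continuity) for the lower bound; the paper simply uses continuity of $h_{k_0}(\mu)$ at $\mu=e^M$ for both inequalities.
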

\begin{proof}
  Note first that by \eqref{2.3} and our discussions in Section \ref{sec 3} that the $h_k(e^M)$'s are non-negative, we have 
\begin{equation*}
  0 \leq h_k(e^M) \leq M\frac {\Lambda_{k-1}}{\Lambda_k}.
\end{equation*}
  Let $k_1$ be an integer so that for all $k \geq k_0$,
\begin{equation*}
   M\frac {\Lambda_{k-1}}{\Lambda_k} > M-\epsilon.
\end{equation*}
   We may assume that $k \geq k_1$ from now on and note that not all $h_k(e^M)$ are $ \leq M-\epsilon$. Otherwise, it follows from \eqref{3.2}, \eqref{3.5'}, \eqref{4.0} and  \eqref{4.10} that
\begin{equation*}
  h_{k+1}(e^M)-h_k(e^M) \geq \frac {\lambda_k}{\Lambda_{k+1}}\Big (\frac {e^{h_{k}(e^M)}}{e^M}-h_k(e^M)+M-1 \Big )+O(\frac {1}{k^2}).
\end{equation*}
  Note that if $h_k(e^M) \leq M-\epsilon$ then
\begin{equation*}
  \frac {e^{h_{k}(e^M)}}{e^M}-h_k(e^M)+M-1 \geq e^{M-\epsilon-M}-M+\epsilon+M-1>0.
\end{equation*}
  It follows from \eqref{3.5'} and the fact that $\sum^{\infty}_{k=k_1}(k+1)^{-1}=+\infty$ that this leads to a contradiction. Thus there is an integer $k_0 > \eta$ for which
\begin{equation*}
   M-\epsilon < h_{k_0}(e^M) \leq M \frac {\Lambda_{k-1}}{\Lambda_k}< \log e^M-\frac {M}{2} \frac {\lambda_{k}}{\Lambda_k}. 
\end{equation*}
   Having fixed $k_0$ this way, we remark that $h_{k_0}(\mu)$ is continuous at $\mu=e^M$ and the lemma follows.
\end{proof}

\begin{lemma}
\label{lem5.2}
  There exist numbers $\beta$, $e^{M-1}<\beta<e^M$, and $c>0$, $0<\delta <1$ such that for all $\mu$ satisfying $\beta < \mu \leq e^M$, and for all $k$ satisfying $1 \leq k \leq N_{\mu}$ ($N_{\mu}$ is the breakdown index) we have  
\begin{equation}
\label{5.0}
 \frac {e^{h_{k}(\mu)}}{\mu}-h_k(\mu)+M-1 > c \Big ( \frac {\Lambda_k}{\lambda_k} \Big )^{-\delta}. 
\end{equation} 
\end{lemma}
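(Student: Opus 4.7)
The plan is as follows. Observe the identity
\begin{equation*}
\psi_\mu(h) := \frac{e^h}{\mu} - h + M - 1 = \phi(\log\mu - h) + (M - \log\mu),
\end{equation*}
where $\phi(v) := e^{-v} + v - 1 \geq 0$ attains its minimum only at $v = 0$ and satisfies $\phi(v) \geq v^2/4$ on any bounded neighborhood of $0$. The lemma thus reduces to bounding both the excess $M - \log\mu \geq 0$ and the deviation $|\log\mu - h_k(\mu)|$ from below.

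I would first invoke Lemma \ref{lem5.1} with a small fixed $\epsilon$, say $\epsilon = M/4$, to obtain an integer $k_0$ and a threshold $\beta_1 \in (e^{M-1}, e^M)$ with $h_{k_0}(\mu) < \log\mu - (M/2)(\lambda_{k_0}/\Lambda_{k_0})$ for all $\mu \in (\beta_1, e^M]$; monotonicity of $\phi$ away from $0$ then gives $\psi_\mu(h_{k_0}(\mu)) \geq c_0 > 0$ uniformly. For each $1 \leq k < k_0$, the recurrence \eqref{2.2} makes $\mu \mapsto h_k(\mu)$ continuous near $e^M$, while \eqref{2.3} together with $h_1 = 0$ yields $h_k(e^M) < M$ strictly, so $\psi_{e^M}(h_k(e^M)) > 0$. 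Since $\{1,\dots,k_0\}$ is finite, one may shrink the threshold to some $\beta_2 \in (\beta_1, e^M)$ securing $\psi_\mu(h_k(\mu)) \geq c_1 > 0$ uniformly for $k \leq k_0$ and $\mu \in (\beta_2, e^M]$. Because $(\Lambda_k/\lambda_k)^{-\delta} \leq 1$ for every $k$, the lemma holds on $[1,k_0]$ by taking $c \leq \min(c_0,c_1)$.

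The main case $k_0 < k \leq N_\mu$ is handled by contradiction. Fix any $\delta \in (0,1)$ and suppose, for a sufficiently small $c>0$, that the bound first fails at some smallest $k^* > k_0$, i.e.\ $g_{k^*} := \psi_\mu(h_{k^*}) \leq c(\Lambda_{k^*}/\lambda_{k^*})^{-\delta}$. The quadratic lower bound on $\phi$ forces $|\log\mu - h_{k^*}| = O((\Lambda_{k^*}/\lambda_{k^*})^{-\delta/2})$ and $M - \log\mu = O((\Lambda_{k^*}/\lambda_{k^*})^{-\delta})$. Expanding $g_{k+1} - g_k$ via Taylor together with \eqref{4.0}, and using $|\psi_\mu'(h_k)| = |e^{h_k}/\mu - 1| = O(g_k^{1/2})$ combined with $h_{k+1} - h_k = O((\lambda_k/\Lambda_{k+1}) g_k)$, I derive a bootstrap $|g_{k+1} - g_k| = O((\lambda_k/\Lambda_{k+1}) g_k^{3/2})$. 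By \eqref{3.5'} the summand $(\lambda_k/\Lambda_{k+1}) g_k^{1/2}$ is comparable to $k^{-1-\delta/2}$ and hence summable, propagating $g_k = O((\Lambda_{k^*}/\lambda_{k^*})^{-\delta})$ uniformly on $[k^*, N_\mu]$. Substituting back into the recurrence bounds $h_{N_\mu} - h_{k^*}$ by $O\bigl(\sum_{j \geq k^*} (\lambda_j/\Lambda_{j+1}) g_j\bigr) = O(1)$, while the breakdown identity forces $h_{N_\mu} \geq \log\mu + \log(\Lambda_{N_\mu}/\lambda_{N_\mu})$. Hence $\log(\Lambda_{N_\mu}/\lambda_{N_\mu}) = O(1)$, and by \eqref{3.5'} this bounds $N_\mu \leq K$ for an absolute constant $K$. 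Choosing the threshold $\beta \in [\beta_2, e^M)$ to be $\beta = \max(\beta_2, \mu_K)$ then ensures $N_\mu > K$ for all $\mu \in (\beta, e^M]$ by the definition of $\{\mu_k\}$ and \eqref{3.1'}, contradicting the above bound and completing the proof.

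The principal obstacle is the bootstrap step: one must show that the infinite tail $\sum_{i \geq 2} i^{-1}(\lambda_k e^{h_k}/(\Lambda_k\mu))^i$ appearing in \eqref{4.0}, which need not be small as $h_k$ approaches the breakdown value $\log(\mu\Lambda_k/\lambda_k)$, is nonetheless dominated by the main contribution along the trajectory considered, and likewise that the Taylor remainder in $g_{k+1} - g_k$ is controlled uniformly. The constraint $\delta < 1$ enters via the summability of $\sum k^{-1-\delta}$, which is precisely what converts the bootstrap into the effective growth estimate $h_{N_\mu} - h_{k^*} = O(1)$ driving the contradiction.
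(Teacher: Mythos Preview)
Your argument has a genuine gap in the bootstrap step, and it is not the obstacle you flag at the end. Write $A:=c(\Lambda_{k^*}/\lambda_{k^*})^{-\delta}\asymp c(k^*)^{-\delta}$. Granting your estimate $|g_{k+1}-g_k|=O((\lambda_k/\Lambda_{k+1})\,g_k^{3/2})$ while $g_k\le 2A$, the increments are of size $O(A^{3/2}/k)$, so $g_{k+1}\le A+O\bigl(A^{3/2}\log((k+1)/k^*)\bigr)$. The induction $g_k\le 2A$ therefore closes only for $\log(k/k^*)\lesssim A^{-1/2}$, i.e.\ for $k\le k^*\exp\bigl(c'(k^*)^{\delta/2}\bigr)$. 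Your claim that $(\lambda_k/\Lambda_{k+1})\,g_k^{1/2}\sim k^{-1-\delta/2}$ is summable presupposes $g_k\sim k^{-\delta}$ rather than $g_k\sim(k^*)^{-\delta}$; but a decaying bound cannot be maintained either, since $g_k=\psi_\mu(h_k)$ increases once $h_k>\log\mu$ (because $\psi_\mu'>0$ there and $h_k$ is monotone). In fact the breakdown condition $h_{N_\mu}\ge\log\mu+\log(\Lambda_{N_\mu}/\lambda_{N_\mu})$ forces $g_{N_\mu}\ge \Lambda_{N_\mu}/\lambda_{N_\mu}-\log(\Lambda_{N_\mu}/\lambda_{N_\mu})-1$, which is large whenever $N_\mu$ is; so the bootstrap \emph{must} fail strictly before $N_\mu$, and you have no control of $g_k$ on the remaining interval. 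Consequently the estimate $h_{N_\mu}-h_{k^*}=O\bigl(\sum_{j\ge k^*}(\lambda_j/\Lambda_{j+1})\,g_j\bigr)=O(1)$ is unjustified, and the purported bound $N_\mu\le K$ (with $K$ independent of $\mu$) does not follow. The final choice $\beta=\max(\beta_2,\mu_K)$ then has nothing to contradict.

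The paper proceeds directly rather than by contradiction, and the key device is different: it introduces the auxiliary quantity
\[
t_k=\epsilon(\log\mu-h_k)+\log(e^M/\mu)-\tfrac14\,\lambda_k/\Lambda_k,
\]
and, by linearizing the recurrence in the regime $h_k<\log\mu$ (using $e^{-u}<1-u+u^2/2$ and replacing the quadratic $(\log\mu-h_k)^2$ by the linear $\epsilon(\log\mu-h_k)$), obtains the clean multiplicative inequality $t_{k+1}>t_k\bigl(1-\epsilon\lambda_{k+1}/\Lambda_{k+1}\bigr)$. This yields $t_k\gtrsim(\Lambda_k/\lambda_k)^{-\epsilon}$ by telescoping, from which the polynomial lower bound on $\psi_\mu(h_k)$ follows (with $\delta=2\epsilon$) after a short computation; the ranges $k<k_0$ and $k$ beyond the validity of the $t_k$-bound are handled separately and trivially. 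The point is that $t_k$ is engineered so that the recurrence becomes linear and the error terms have a definite sign, avoiding the Gr\"onwall-type accumulation that sinks your bootstrap.
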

\begin{proof}
  We apply Lemma \ref{lem5.1} with $\eta$ large enough so that the following inequality holds for any integer $k \geq \eta$:
\begin{equation}
\label{5.1}
\frac {\lambda_{k}}{\Lambda_{k+1}}\Big | -\frac {\Lambda_k}{\lambda_{k}}\log \Big (\frac {\lambda_{k+1}}{\lambda_{k}} \Big) +1-M \Big | + \frac {\Lambda_{k}}{\Lambda_{k+1}}\sum^{+\infty}_{i=2}\frac 1{i}\Big ( \frac {\lambda_{k}}{\Lambda_{k}} \Big )^i < \frac 3{4}\frac {\lambda^2_{k}}{\Lambda_{k}\Lambda_{k+1}}. 
\end{equation}  
  We shall also choose $\epsilon$ small enough so that we obtain values of $k_0$ and $\beta$. Without loss of generality, we may assume $\mu < e^M$ and for the time being we keep $\mu$ fixed ($\beta < \mu < e^M$) and we write $h_k$ instead of $h_k(\mu)$.

  As we remarked in Section \ref{sec 3}, the sequence $h_{k_0}, h_{k_0+1}, \ldots$ is increasing, possibly until breakdown. We shall now first consider those integers $k \geq k_0$ for which $h_{k}< \log \mu$. For those $k$ we can prove
\begin{equation}
\label{5.2}
  h_{k+1}-h_k < \frac {\lambda_k}{\Lambda_{k+1}}\Big ( \frac 1{2}(\log \mu-h_{k})^2+\log (\frac {e^M}{\mu})+ \frac 3{4} \frac {\lambda_k}{\Lambda_k}  \Big ).
\end{equation}
   This follows by \eqref{3.2} and \eqref{4.0}, using $e^{-u}<1-u+u^2/2$, where $u=\log \mu-h_{\mu}$ and noting that
\begin{eqnarray*}
 && \frac {\lambda_{k}}{\Lambda_{k+1}}\Big ( -\frac {\Lambda_k}{\lambda_{k}}\log \Big (\frac {\lambda_{k+1}}{\lambda_{k}} \Big) +1-M \Big )+ \frac {\Lambda_{k}}{\Lambda_{k+1}}\sum^{+\infty}_{i=2}\frac 1{i}\Big ( \frac {\lambda_{k}}{\Lambda_{k}}e^{-u} \Big )^i  \\
&< & \frac {\lambda_{k}}{\Lambda_{k+1}}\Big | -\frac {\Lambda_k}{\lambda_{k}}\log \Big (\frac {\lambda_{k+1}}{\lambda_{k}} \Big) +1-M \Big |+\frac {\Lambda_{k}}{\Lambda_{k+1}}\sum^{+\infty}_{i=2}\frac 1{i}\Big ( \frac {\lambda_{k}}{\Lambda_{k}} \Big )^i < \frac 3{4}\frac {\lambda^2_{k}}{\Lambda_{k}\Lambda_{k+1}},
\end{eqnarray*}
  because of $e^{-u}<1$ and \eqref{5.1}.

  Since $\mu < e^M$ and by Lemma \ref{lem5.1}, $M- \epsilon <h_{k_0} \leq h_k < \log \mu$, we have $0< \log \mu -h_k < 2\epsilon$, and therefore we can replace \eqref{5.2} by the linear recurrence relation
\begin{equation}
\label{5.3}
  h_{k+1}-h_k < \frac {\lambda_k}{\Lambda_{k+1}}\Big (\epsilon (\log \mu-h_{k})+\log (\frac {e^M}{\mu})+\frac 3{4} \frac {\lambda_k}{\Lambda_k} \Big ).
\end{equation}
   Putting 
\begin{equation}
\label{5.3'}
   \epsilon(\log \mu-h_{k})+\log (\frac {e^M}{\mu})- \frac 1{4} \frac {\lambda_k}{\Lambda_k} =t_k,
\end{equation}
   so that it follows from \eqref{5.3} that
\begin{equation*}
   t_{k+1} > t_k\Big (1-\frac {\epsilon \lambda_k}{\Lambda_{k+1}} \Big )+(\frac {\lambda_k}{4\Lambda_k}-\frac {\lambda_{k+1}}{4\Lambda_{k+1}}-\frac {\epsilon \lambda^2_k}{\Lambda_{k}\Lambda_{k+1}}).
\end{equation*}
   As we have assumed that $\{ \lambda_k \}^{\infty}_{k=1}$ a non-decreasing sequence, we have
\begin{equation*}
   \frac {\lambda_k}{4\Lambda_k}-\frac {\lambda_{k+1}}{4\Lambda_{k+1}}-\frac {\epsilon \lambda^2_k}{\Lambda_{k}\Lambda_{k+1}} \geq \frac {\lambda_k}{4\Lambda_k}-\frac {\lambda_{k+1}}{4\Lambda_{k+1}}-\frac {\epsilon \lambda_k\lambda_{k+1}}{\Lambda_{k}\Lambda_{k+1}} .
\end{equation*}
  It follows from \eqref{1.6} that the right-hand side expression above is positive if we choose $\epsilon$ small enough and we may assume that our $0< \epsilon<1/2$ is so chosen. Note that this also implies that $0< \log \mu -h_k < 2\epsilon<1$. It follows that
\begin{equation}
\label{5.4}
   t_{k+1} > t_k\Big (1-\frac {\epsilon \lambda_k}{\Lambda_{k+1}} \Big ) \geq t_k\Big (1-\frac {\epsilon \lambda_{k+1}}{\Lambda_{k+1}} \Big ).
\end{equation}
   By Lemma \ref{lem5.1} we have $t_{k_0} >0$ so that the above implies $t_k>0$ for all $k$ under consideration.

   It follows from \eqref{5.4} and $1-\epsilon x > (1-x)^{\epsilon}, 0<x<1$ that
\begin{equation*}
   t_{k+1} > t_k(\Lambda_k)^{\epsilon}(\Lambda_{k+1})^{-\epsilon}
=t_{k}(\frac {\Lambda_{k}}{\lambda_k})^{\epsilon}(\frac {\Lambda_{k+1}}{\lambda_k})^{-\epsilon}.
\end{equation*}
   It follows from \eqref{1.6} that the sequence $\{ \Lambda_k/\lambda_k \}^{\infty}_{k=1}$ is increasing and we deduce that
\begin{equation}
\label{5.6}
   t_{k+1} > t_{k_0}(\frac {\Lambda_{k_0}}{\lambda_{k_0}})^{\epsilon}(\frac {\Lambda_{k+1}}{\lambda_k})^{-\epsilon},
\end{equation}
   for all $k$ under consideration, i.e. for all $k$ for which $h_k < \log \mu$. This is certainly satisfied if $t_k>\log (e^M/\mu)$, and \eqref{5.6} guarantees that this is true as long as the right-hand side expression of \eqref{5.6} is $>\log (e^M/\mu)$. Therefore
\begin{equation}
\label{5.7}
  t_k \geq t_{k_0}(\frac {\Lambda_{k_0}}{\lambda_{k_0}})^{\epsilon}(\frac {\Lambda_{k}}{\lambda_{k-1}})^{-\epsilon}
\end{equation}
   for all $k \geq k_0$ satisfying
\begin{equation}
\label{5.8}
  \frac {\Lambda_{k}}{\lambda_{k-1}} < \Big (\frac {\Lambda_{k_0}}{\lambda_{k_0}} \Big )t^{1/\epsilon}_{k_0}\Big (\log (e^M/\mu) \Big )^{-1/\epsilon},
\end{equation}
   and we are sure that no breakdown occurs in this range.

   Now we return to the discussion on \eqref{5.0} and if $0< h <\log \mu$, we have, on using $e^{-u}>1-u+u^2/3$  for $0<u<1$ and $0 < \log (e^M/\mu) <1$, that
\begin{eqnarray*}
 && e^{h-\log \mu}-h+M-1 \\
&>& \log (e^M/\mu)+\frac 1{3}(\log \mu - h)^2 >  \Big ( \log (e^M/\mu) \Big )^2+\frac 1{3}(\log \mu - h)^2 \\
& >& \frac 1{8}\Big (2\log (e^M/\mu)+\log \mu - h\Big )^2,
\end{eqnarray*} 
  where the last inequality above follows from $u^2+v^2/3 > u^2+(v/2)^2 \geq (u+v/2)^2/2$ for $u, v>0$. Apply this with $h=h_k$ and note that it follows from \eqref{5.3'} and \eqref{5.7} that
\begin{equation*}
   (\epsilon+1)\Big (\log \mu-h_{k}+2\log (\frac {e^M}{\mu}) \Big ) > t_k \geq t_{k_0}(\frac {\Lambda_{k_0}}{\lambda_{k_0}})^{\epsilon}(\frac {\Lambda_{k}}{\lambda_{k-1}})^{-\epsilon},
\end{equation*}
   This implies that the left-hand side of \eqref{5.0} is at least
\begin{equation*}
    \frac {t^2_{k_0}}{8(\epsilon+1)^2}(\frac {\Lambda_{k_0}}{\lambda_{k_0}})^{2\epsilon}(\frac {\Lambda_{k}}{\lambda_{k-1}})^{-2\epsilon}.
\end{equation*}
   This holds for $k$ when \eqref{5.8} is satisfied. It follows from \eqref{1.3} that 
$\lambda_{k}/\lambda_{k-1}$ is bounded above for any $k \geq 2$. Let $c_1$ denote such an upper bound and we conclude that 
the left-hand side of \eqref{5.0} is at least
\begin{equation*}
    \frac {t^2_{k_0}}{8(\epsilon+1)^2c^{2\epsilon}_1}(\frac {\Lambda_{k_0}}{\lambda_{k_0}})^{2\epsilon}(\frac {\Lambda_{k}}{\lambda_{k}})^{-2\epsilon}:=c_2(\frac {\Lambda_{k}}{\lambda_{k}})^{-2\epsilon}.
\end{equation*}
   Other $k$'s do not cause much trouble. First, for the values $1 \leq k < k_0$, we have $h_k(\mu) \leq h_{k_0}(\mu) <  \log \mu- M\lambda_{k_0}/(2\Lambda_{k_0})$ by Lemma \ref{lem5.1} and the fact that $h_k$ increases as $k$ increases. It follows that
\begin{equation*}
 e^{h_k-\log \mu}-h_k+M-1  >  \frac 1{3}(\log \mu - h_k)^2 >  \frac {M^2\lambda^2_{k_0}}{12\Lambda^2_{k_0}}\geq \frac {M^2\lambda^2_{k_0}}{12\Lambda^2_{k_0}}(\frac {\Lambda_{k}}{\lambda_{k}})^{-2\epsilon}:=c_3(\frac {\Lambda_{k}}{\lambda_{k}})^{-2\epsilon}.
\end{equation*} 
   Now, for the remaining case $k_0 \leq k \leq  N_{\mu}$ (which is empty if $\mu=e^M$) such that
\begin{equation*}
   \frac {\Lambda_{k}}{\lambda_{k-1}} \geq \Big (\frac {\Lambda_{k_0}}{\lambda_{k_0}} \Big )t^{1/\epsilon}_{k_0}\Big (\log (e^M/\mu) \Big )^{-1/\epsilon},
\end{equation*}
   we use that 
\begin{equation*}
 e^{h-\log \mu}-h+M-1  > \log (e^M/\mu)
\end{equation*} 
  for all $h$ to see that the left-hand side of \eqref{5.0} is at least
\begin{equation*}
  \Big (\frac {\Lambda_{k_0}}{\lambda_{k_0}} \Big )^{\epsilon}\frac {t_{k_0}}{c^{\epsilon}_1}(\frac {\Lambda_{k}}{\lambda_{k}})^{-2\epsilon}:=c_4(\frac {\Lambda_{k}}{\lambda_{k}})^{-2\epsilon}.
\end{equation*}
  In all three cases the constants are independent of $\mu$ and $k$, so on letting $c=\min (c_2, c_3, c_4)$ and $\delta=2\epsilon$ completes the proof of the lemma.
\end{proof}

\begin{lemma}
\label{lem5.3}
  There exist numbers $\beta$, $e^{M-1}<\beta<e^M$ such that for all $\mu$ satisfying $\beta < \mu < e^M$ there exists an index $N <  N_{\mu}$ with $h_N > \max (2, 2M)$.
\end{lemma}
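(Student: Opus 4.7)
The plan is to argue by contradiction, separating the cases $N_\mu=+\infty$ and $N_\mu<+\infty$. In both situations, suppose that $h_k(\mu)\leq\max(2,2M)$ for every $k<N_\mu$. The starting point is the expansion \eqref{4.0} combined with \eqref{4.10}, which gives
\begin{equation*}
  h_{k+1}(\mu)-h_k(\mu)=\frac{\lambda_k}{\Lambda_{k+1}}\Big(\frac{e^{h_k(\mu)}}{\mu}-h_k(\mu)+M-1\Big)+\frac{\Lambda_k}{\Lambda_{k+1}}\sum_{i=2}^{\infty}\frac{1}{i}e^{-iw_k}+O\Big(\frac{\lambda_k^2}{\Lambda_k\Lambda_{k+1}}\Big),
\end{equation*}
where $w_k:=\log(\mu\Lambda_k/\lambda_k)-h_k(\mu)>0$ for $k<N_\mu$. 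Writing $g(h):=e^h/\mu-h+M-1$, convexity gives $g(h)\geq M-\log\mu>0$ for every $h$ whenever $\mu<e^M$.

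When $N_\mu=+\infty$, the pointwise bound $g(h_k)\geq M-\log\mu>0$, together with $\lambda_k/\Lambda_{k+1}\geq c/k$ from \eqref{3.5'} and absorption of the $O(\lambda_k^2/(\Lambda_k\Lambda_{k+1}))$ correction for $k$ sufficiently large (depending on $\mu$), yields $h_{k+1}-h_k\geq c'(M-\log\mu)/k$. Summing diverges, forcing $h_k\to+\infty$ and contradicting the bound $h_k\leq\max(2,2M)$; therefore some finite $N$ satisfies $h_N>\max(2,2M)$ with $N<N_\mu=+\infty$ automatic.

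When $N_\mu<+\infty$, choose $\beta$ so close to $e^M$ that $\mu\in(\beta,e^M)$ forces $N_\mu\geq K_0$, where $K_0$ is a fixed integer large enough that $\log(\mu\Lambda_k/\lambda_k)>\max(2,2M)+2\log 2$ for all $k\geq K_0-1$ and all $\mu\in(\beta,e^M)$; this is possible because $\mu_k\to e^M$ from the discussion after \eqref{3.1'} and $\Lambda_k/\lambda_k\to+\infty$ by \eqref{3.5'}. The contradiction hypothesis then forces $w_{N_\mu-1}\geq 2\log 2$. Factoring $\lambda_{k+1}/\lambda_k-\lambda_{k+1}e^{h_k}/(\Lambda_k\mu)=(\lambda_{k+1}/\lambda_k)(1-e^{-w_k})$ in \eqref{2.2} and invoking \eqref{4.10} gives
\begin{equation*}
  h_{N_\mu}-h_{N_\mu-1}=\frac{\lambda_{N_\mu-1}}{\Lambda_{N_\mu}}(M-1-h_{N_\mu-1})-\frac{\Lambda_{N_\mu-1}}{\Lambda_{N_\mu}}\log(1-e^{-w_{N_\mu-1}})+O\Big(\frac{1}{N_\mu^{2}}\Big).
\end{equation*}
The first term is $O(1/N_\mu)$ since $h_{N_\mu-1}$ is bounded, and $-\log(1-e^{-w_{N_\mu-1}})\leq-\log(1-1/4)=\log(4/3)$ since $w_{N_\mu-1}\geq 2\log 2$. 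On the other hand, the breakdown condition \eqref{3.1} at $k=N_\mu$, together with the strict monotonicity of $\Lambda_k/\lambda_k$ from \eqref{1.6}, forces $h_{N_\mu}-h_{N_\mu-1}\geq w_{N_\mu-1}\geq 2\log 2$. Since $2\log 2-\log(4/3)=\log 3>0$, these bounds are incompatible for $K_0$ large enough, so $h_{N_\mu-1}>\max(2,2M)$ and $N=N_\mu-1<N_\mu$ works.

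The main obstacle is the jump estimate in the second case: the logarithmic singularity of \eqref{2.2} as $h_{N_\mu-1}$ approaches the breakdown level must be controlled by securing a uniform gap $w_{N_\mu-1}\geq 2\log 2$, which is arranged by taking $\beta$ close enough to $e^M$ that $\log(\mu\Lambda_{N_\mu-1}/\lambda_{N_\mu-1})$ automatically exceeds $\max(2,2M)$ by more than $2\log 2$; the explicit numeric comparison $\log 3>0$ then closes the argument.
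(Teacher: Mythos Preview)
Your argument is correct and takes a genuinely different route from the paper's. The paper proceeds forward-constructively: it invokes Lemma~\ref{lem5.1} to locate a starting index $k_0$, then shows that (i) as long as $h_k<\max(2,2M)+M+1$ one has $\mu^{-1}(\lambda_k/\Lambda_k)e^{h_k}\le 1/2$ and hence $k<N_\mu$, (ii) the harmonic divergence $\sum\lambda_k/\Lambda_{k+1}=\infty$ forces $h_k$ past $\max(2,2M)$ eventually, and (iii) the single-step increment satisfies $h_{k+1}-h_k<M+1$, so the first index exceeding $\max(2,2M)$ still lies in the safe zone $h<\max(2,2M)+M+1$. You instead split on whether $N_\mu$ is finite. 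The infinite case uses the same divergence mechanism; the finite case is disposed of by a contradiction on the \emph{final} step before breakdown, bounding $h_{N_\mu}-h_{N_\mu-1}$ from above by roughly $\log(4/3)$ (from the gap $w_{N_\mu-1}\ge 2\log 2$) and from below by $w_{N_\mu-1}\ge 2\log 2$ (from the breakdown condition \eqref{3.1} and the monotonicity of $\Lambda_k/\lambda_k$ from \eqref{1.6}). The paper's approach yields a uniform step-size bound and avoids any case split; yours dispenses with Lemma~\ref{lem5.1} and needs only one jump estimate rather than a uniform one along the whole trajectory.

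One slip to repair: to force $N_\mu\ge K_0$ you cite ``$\mu_k\to e^M$ from the discussion after \eqref{3.1'}''. That convergence is only argued heuristically in Section~\ref{sec 4} via \eqref{4.2}, whose rigorous proof in Section~\ref{sec 6} uses the present lemma, so the citation is circular. What you actually need is much weaker and already available: \eqref{3.1'} gives $\mu_{K_0-1}<e^M$, so any $\beta\in(\max(e^{M-1},\mu_{K_0-1}),\,e^M)$ forces $N_\mu\ge K_0$ for $\mu>\beta$. To avoid the apparent circularity between $K_0$ and $\beta$, fix $K_0$ first using the $\mu$-free requirement $\log(e^{M-1}\Lambda_k/\lambda_k)>\max(2,2M)+2\log 2$ for $k\ge K_0-1$, and also large enough that the $O(1/N_\mu)$ terms in your jump estimate are below $2\log 2-\log(4/3)=\log 3$.
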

\begin{proof}
  We apply Lemma \ref{lem5.1} with $\eta$ large enough and some $0< \epsilon <1$, so that the following estimation holds for any integer $k \geq \eta$:
\begin{equation}
\label{5.12}
 \lambda_k/\Lambda_k < e^{-2-\max (2, 2M)}/2, \hspace{0.1in} \Big | -\frac {\Lambda_k}{\lambda_{k}}\log \Big (\frac {\lambda_{k+1}}{\lambda_{k}} \Big) +1-M \Big |< 1-\log 2 ,
\end{equation}  
 and Lemma \ref{lem5.1} provides us with $k_0>\eta$ and $\beta$ such that \eqref{5.00} holds. We now consider the numbers $h_{k_0}, h_{k_0+1}, \ldots$ as far as they are $<\max (2, 2M)+M+1$. If $k \geq k_0$, $h_k<\max (2, 2M)+M+1$, we have
\begin{equation}
\label{5.14}
  \mu^{-1}\lambda_k/\Lambda_ke^{h_k} \leq 1/2,
\end{equation}
   so that by our definition of the breakdown index (see \eqref{3.1}), we have $k < N_{\mu}$. It also follows from \eqref{3.2}-\eqref{4.10}, on using $e^h/\mu-h+M-1 > \log(e^M/\mu)$, that
\begin{equation*}
  h_{k+1}-h_k > \frac {\lambda_k}{\Lambda_{k+1}}\log(e^M/\mu)+O(\frac {\lambda^2_k}{\Lambda_{k}\Lambda_{k+1}}).
\end{equation*}
  The lower bound above shows that not for all $k \geq k_0$ we have $h_k \leq \max (2, 2M)$, since $\sum^{+\infty}_{k_0}(h_{k+1}-h_k)$ would diverge in view of \eqref{3.5'}.
   
   Now, \eqref{5.14},  implies that (with $u=\log \mu-h_{\mu}$ here)
\begin{equation*}
  \frac {\Lambda_{k}}{\Lambda_{k+1}}\sum^{+\infty}_{i=2}\frac 1{i}\Big ( \frac {\lambda_{k}}{\Lambda_{k}}e^{-u} \Big )^i \leq \sum^{+\infty}_{i=2}\frac 1{i}( \frac {1}{2} )^i = \log 2 -1/2.
\end{equation*}
   It follows from this and \eqref{3.2}, \eqref{4.0}, \eqref{5.12}, \eqref{5.14} that 
\begin{equation*}
  h_{k+1}-h_k < \frac {\lambda_k}{\Lambda_{k+1}}(\frac {e^{h_k}}{\mu}-\mu+M-1)+1-\log 2 +\log 2-1/2.
\end{equation*}
   When $M \leq 1$, the above can be estimated by, via \eqref{5.12},
\begin{equation*}
  h_{k+1}-h_k < \frac {\lambda_k}{\Lambda_{k}}\frac {e^{h_k}}{\mu}+1/2<1 <M+1.
\end{equation*}
   Similarly, when $M>1$, we get
\begin{equation*}
  h_{k+1}-h_k < \frac {\lambda_k}{\Lambda_{k}}\frac {e^{h_k}}{\mu}+M-1+1/2< M+1.
\end{equation*}
   It follows from the above that if we let $h_{k_1}$ be the last one below $\max (2, 2M)$, then $h_{k_1+1}$ is still below $\max (2, 2M)+M+1$ so that we can take $N=k_1+1$ here and this completes the proof.
\end{proof}

\section{Proof of Theorem \ref{thm2}}
\label{sec 6} \setcounter{equation}{0}
   As suggested by the discussion in Section \ref{sec 4}, we shall study $\theta(h_k)$, where $\theta$ is defined by
\begin{equation*}
   \theta(y)=\int^{y}_{0}\frac {dx}{e^x/\mu-x+M-1}.
\end{equation*}
   We first simplify the recurrence formula \eqref{3.2}. Assuming
\begin{equation}
\label{6.1}
    e^{M-1} < \mu \leq e^M, \hspace{0.1in} h_k <\max (2, 2M),
\end{equation}
   we may also assume $k$ is large enough so that \eqref{3.1} is not satisfied. We have
\begin{equation*}
   h_{k+1}-h_k=\frac {\lambda_k}{\Lambda_{k+1}}\Big (\frac {e^{h_{k}}}{\mu}-h_k+M-1+\gamma_k  \Big ),
\end{equation*}
   where
\begin{equation*}
  |\gamma_k| \leq \Big |-\frac {\Lambda_k}{\lambda_{k}}\log \Big (\frac {\lambda_{k+1}}{\lambda_{k}}\Big ) -M+1 \Big |+\frac {\Lambda_{k}}{\lambda_{k}}\sum^{+\infty}_{i=2}\frac 1{i}\Big ( \frac {\lambda_{k}}{\Lambda_{k}}e^{h_{\mu}-\log \mu} \Big )^i \leq C_2\frac {\lambda_k}{\Lambda_{k}},
\end{equation*}
   for some constant $C_2>0$. It follows from this that there exists a constant $C_3>0$ such that 
\begin{equation*}
   |h_{k+1}-h_k| \leq C_3\frac {\lambda_k}{\Lambda_{k+1}}.
\end{equation*}
   We then deduce easily from above that for $h_k \leq x \leq h_{k+1}$,
\begin{equation*}
   \Big |\frac {e^{x}}{\mu}-x-(\frac {e^{h_{k}}}{\mu}-h_k) \Big | \leq C_4\frac {\lambda_k}{\Lambda_{k+1}} \leq C_4\frac {\lambda_k}{\Lambda_{k}},
\end{equation*}
   where $C_4>0$ is a constant not depending on $\mu$ or $k$ (still assuming \eqref{6.1}).

   We now apply the mean value theorem to get:
\begin{equation*}
   \theta(h_{k+1})-\theta(h_{k})=(h_{k+1}-h_k)\theta '(x)
\end{equation*}
   with some $x$ in between $h_k$ and $h_{k+1}$. Hence it follows from our discussion above that
\begin{equation*}
   \theta(h_{k+1})-\theta(h_{k})=\frac {\lambda_k}{\Lambda_{k+1}}\frac {H+\gamma_k}{H+\gamma '_k},
\end{equation*}
  where 
\begin{equation*}
  H=\frac {e^{h_{k}}}{\mu}-h_k+M-1, \hspace{0.1in} |\gamma_k| \leq C_2\frac {\lambda_k}{\Lambda_{k}}, \hspace{0.1in} |\gamma '_k| \leq C_4\frac {\lambda_k}{\Lambda_{k}}.
\end{equation*}
   
   We now apply Lemma \ref{lem5.2} to conclude that there exists a $\beta_1$ with $e^{M-1} < \beta_1 < e^M$ and a $c>0$, $0<\delta <1$  such that for all $k$ satisfying $1 \leq k \leq N_{\mu}$, we have
\begin{equation*}
  H > c\Big ( \frac {\Lambda_k}{\lambda_k} \Big )^{-\delta}.
\end{equation*}   
   This implies that
\begin{equation*}
 |\gamma_k| \leq \frac {C_2}{c} \Big (\frac {\lambda_k}{\Lambda_{k}} \Big )^{1-\delta}H, \hspace{0.1in} |\gamma '_k| \leq \frac {C_4}{c} \Big (\frac {\lambda_k}{\Lambda_{k}} \Big )^{1-\delta}H.
\end{equation*} 
   Note it follows from \eqref{3.5'} that
\begin{equation*}
  \frac {\lambda_k}{\Lambda_{k+1}}-\frac {\lambda_k}{\Lambda_{k}}=-\frac {\lambda_k\lambda_{k+1}}{\Lambda_{k}\Lambda_{k+1}}=O(\frac 1{k^2}).
\end{equation*}
   It follows from this and \eqref{3.5'} that we can find an integer $m$, independent of $\mu$ such that for $k>m, h_k < \max (2, 2M)$, we have
\begin{equation*}
   \theta(h_{k+1})-\theta(h_{k})=\frac {\lambda_k}{\Lambda_{k}}\frac {H+\gamma_k}{H+\gamma '_k}+O(\frac 1{k^2})=\frac {C}{k}+O \Big (\frac 1{k^2}+\frac 1{k^{2-\delta}} \Big ).
\end{equation*}
   We recast the above as
\begin{equation*}
   |\theta(h_{k+1})-\theta(h_{k})-C \log (1+1/k) | =O \Big (\frac 1{k^2}+\frac 1{k^{2-\delta}} \Big ).
\end{equation*}
   Now assuming $\mu < e^M$, we take the sum over the values $m \leq k <N$, where $N$ is the first index with $h_N > \max (2, 2M)$ (see Lemma \ref{lem5.3}). This gives us
\begin{equation*}
   |\theta(h_{N})-C \log N | =O (1)+\log m+\theta(h_m).
\end{equation*}

   By Lemma \ref{lem5.1}, for any $\eta>M$, there exists $\beta_2, \beta_1 < \beta_2 < e^M$ and $k_0>\eta$ so that $h_{k_0}(\mu) < \log \mu-M\lambda_{k_0}/(2\Lambda_{k_0})$. We now further take the integer $m$ to be equal to this $k_0$. Thus, the maximum of the integrand in $\theta(h_m)$ is attained at $x=h_m$ and that
\begin{eqnarray*}
 &&  e^{h_m}/\mu-h_m+M-1 \\
 & \geq &  e^{-M\lambda_m/(2\Lambda_m)}-\log \mu+M\lambda_m/(2\Lambda_m)+M-1 \\
 & > &  1-M\lambda_m/(2\Lambda_m)+M^2\lambda^2_m/(8\Lambda^2_m)-\log \mu+M\lambda_m/(2\Lambda_m)+M-1 \\
  &=& M-\log \mu+M^2\lambda^2_m/(8\Lambda^2_m)>M^2\lambda^2_m/(8\Lambda^2_m).
\end{eqnarray*}
  It follows that
\begin{equation*}
  \theta(h_m)=\int^{h_m}_{0}\frac {dx}{e^x/\mu-x+M-1} < (\log \mu )(8\Lambda^2_m)/(M^2\lambda^2_m)=O(1).
\end{equation*}
   We deduce from this that
\begin{equation}
\label{6.2}
   |\theta(h_{N})-C \log N | =O (1).
\end{equation}

   It is not difficult to find the asymptotic behavior of $\theta(\infty)$. If $\mu < e^M$, $\mu \rightarrow e^M$, then routine methods (cf. Sec. \ref{sec 4}) lead to
\begin{equation*}
   \theta(\infty)=\int^{\infty}_{0}\frac {dx}{e^x/\mu-x+M-1}=\sqrt{2}\pi\Big (\log (e^M/\mu ) \Big )^{-1/2}+O(1).
\end{equation*}
   It is also easy to see that $\theta(\infty)-\theta(\max (2, 2M)) = O(1)$. As $h_{N} \geq \max (2, 2M)$, we have $\theta(\max (2, 2M)) \leq \theta(h_{N}) < \theta(\infty)$.  
It follows from \eqref{6.2} that
\begin{equation*}
   \log N=\frac {\sqrt{2}\pi}{C} \Big (\log (e^M/\mu ) \Big )^{-1/2}+O(1).
\end{equation*}
  According to \eqref{3.6} and our discussion in Section \ref{sec 4}, this completes the proof of \eqref{4.2} and it was already shown there that \eqref{4.2} leads to our assertion for Theorem \ref{thm2}.
  
\section{An Application of Theorem \ref{thm2}}
\label{sec 7} \setcounter{equation}{0}
   As an application of Theorem \ref{thm2}, we consider in this section the case $\lambda_k=k^{\alpha}$ for $\alpha \geq 1$. Certainly, the sequence $\{ k^{\alpha} \}^{\infty}_{k=1}$ is a non-decreasing sequence satisfying \eqref{1.3}. We note the following
\begin{lemma}
\label{lem0}
    Let $\alpha \geq 1$ be fixed. For any integer $n \geq 1$, we have
\begin{equation}
\label{7.1}
    \frac {\alpha}{\alpha+1}\frac {n^{\alpha}(n+1)^{\alpha}}{(n+1)^{\alpha}-n^{\alpha}} \leq  \sum^n_{i=1}i^{\alpha} \leq  \frac {(n+1)^{\alpha+1}}{\alpha+1}.
\end{equation}
\end{lemma}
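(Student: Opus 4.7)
For the upper bound, since $x^{\alpha}$ is non-decreasing on $[0,\infty)$ for $\alpha \geq 1$, the estimate $i^{\alpha} \leq \int_{i}^{i+1} x^{\alpha}\,dx$ holds for each $i \geq 1$; summing over $1 \leq i \leq n$ gives $\sum_{i=1}^{n} i^{\alpha} \leq \int_{1}^{n+1} x^{\alpha}\,dx = \tfrac{(n+1)^{\alpha+1}-1}{\alpha+1} \leq \tfrac{(n+1)^{\alpha+1}}{\alpha+1}$.

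For the lower bound, the plan is to denote the claimed right-hand expression by $L_n$ and set $L_0 = 0$; telescoping will reduce the task to proving the one-step inequality $L_n - L_{n-1} \leq n^{\alpha}$ for every $n \geq 1$, since summing then yields $L_n \leq \sum_{i=1}^{n} i^{\alpha}$. The base case $n=1$ says $L_1 \leq 1$, which simplifies to $2^{\alpha} \geq \alpha+1$ and is an immediate consequence of Bernoulli's inequality $(1+1)^{\alpha} \geq 1+\alpha$. For $n \geq 2$, I factor $n^{\alpha}$ out of $L_n - L_{n-1}$ and introduce $F(k) := (k+1)^{\alpha}/[(k+1)^{\alpha}-k^{\alpha}]$; a short computation, using $\tfrac{(n-1)^{\alpha}}{n^{\alpha}-(n-1)^{\alpha}} = F(n-1) - 1$, will recast $L_n - L_{n-1} \leq n^{\alpha}$ as the cleaner statement $F(n) - F(n-1) \leq 1/\alpha$.

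It then suffices to prove the pointwise bound $F'(x) \leq 1/\alpha$ for $x > 0$, since integrating over $[n-1,n]$ delivers the desired difference bound. Differentiating $F(x) = 1/(1-v)$ with $v = (x/(x+1))^{\alpha}$ gives $F'(x) = \alpha v/[x(x+1)(1-v)^{2}]$; substituting $r = x/(x+1) \in (0,1)$ and taking square roots transforms $\alpha F'(x) \leq 1$ into $\alpha\, r^{(\alpha-1)/2}(1-r) \leq 1 - r^{\alpha}$. A final substitution $r = e^{-2t}$ ($t>0$) rewrites each side as $e^{-\alpha t}$ times a hyperbolic sine, so the inequality collapses to $\sinh(\alpha t) \geq \alpha \sinh t$ for $\alpha \geq 1$, $t \geq 0$. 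This is immediate from
\[
\sinh(\alpha t) - \alpha \sinh t \;=\; \sum_{k\geq 1}\frac{\alpha^{2k+1}-\alpha}{(2k+1)!}\,t^{2k+1} \;\geq\; 0,
\]
since $\alpha^{2k+1} \geq \alpha$ for $\alpha \geq 1$ and $k \geq 1$.

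The main obstacle I anticipate is assembling the algebraic chain $L_n - L_{n-1} \leq n^{\alpha} \Leftrightarrow F(n)-F(n-1) \leq 1/\alpha \Leftarrow F'(x) \leq 1/\alpha \Leftrightarrow \sinh(\alpha t) \geq \alpha \sinh t$ cleanly; each individual link is elementary, but identifying the parametrization $x \mapsto x/(x+1) \mapsto e^{-2t}$ that collapses a combinatorial-looking inequality to a one-line Taylor-series fact is the only place where care is needed.
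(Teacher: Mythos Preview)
Your proof is correct. For the upper bound, your integral comparison is essentially equivalent to the induction the paper alludes to (the inductive step $(n+1)^{\alpha}\leq\frac{(n+2)^{\alpha+1}-(n+1)^{\alpha+1}}{\alpha+1}$ is exactly the single-term integral bound). For the lower bound, the paper does not give an argument at all but simply cites \cite[Lemma~2, p.~18]{L&S}; you instead supply a complete self-contained proof. Your telescoping reduction to $F(n)-F(n-1)\le 1/\alpha$ is clean, the derivative computation and the substitutions $r=x/(x+1)=e^{-2t}$ check out, and the final inequality $\sinh(\alpha t)\ge\alpha\sinh t$ for $\alpha\ge 1$, $t\ge 0$ is indeed immediate from the Taylor coefficients. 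So your route is genuinely different from (and more informative than) the paper's, which outsources the hard half of the lemma; the cost is a somewhat longer argument, but the benefit is that the reader need not consult an external reference.
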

   We point out here the left-hand side inequality above is \cite[Lemma 2, p.18]{L&S} and the right-hand side inequality can be easily shown by induction. 

   It follows readily from the above lemma that \eqref{3.5'} holds with $C=\alpha+1$. We note here it is easy to see that \eqref{5} with $M=1/C$ follows from the left-hand side inequality of \eqref{7.1}, which implies
\begin{equation*}
\label{101}
 \sum_{i=1}^{n+1}i^{\alpha}/(n+1)^{\alpha}-\sum_{i=1}^ni^{\alpha}/n^{\alpha}
 =1+ \Big (\frac 1{(n+1)^{\alpha}}-\frac 1{n^{\alpha}} \Big )\sum^n_{i=1}i^{\alpha} \leq \frac {1}{\alpha+1}.
\end{equation*}
  This combined with the upper bound in \eqref{7.1} also leads to \eqref{1.5} easily. 

  Now, to show \eqref{1.4}, we assume \eqref{1.6} for the moment and note that
\begin{equation*}
  \log \Big(\frac {\Lambda_{k+1}/\lambda_{k+1}}{\Lambda_k/\lambda_k} \Big ) =\log \Big(1+\frac {\Lambda_{k+1}/\lambda_{k+1}-\Lambda_k/\lambda_k}{\Lambda_k/\lambda_k} \Big ) \geq \frac {\Lambda_{k+1}/\lambda_{k+1}-\Lambda_k/\lambda_k}{\Lambda_{k+1}/\lambda_{k+1}}.
\end{equation*}
   We then deduce that \eqref{1.4} follows from
\begin{equation*}
   \Lambda_{k+1}/\lambda_{k+1}-\Lambda_k/\lambda_k \geq M\lambda_k/\lambda_{k+1}.
\end{equation*}
   Note that the above also establishes \eqref{1.6}. In our case, it is easy to see that this becomes (for any $n \geq 1$):
\begin{equation}
\label{7.2}
  \sum^{n}_{i=1}i^{\alpha} \leq \frac {\alpha}{\alpha+1}\frac {(n+1)^{2\alpha}}{(n+2)^{\alpha}-(n+1)^{\alpha}}.
\end{equation}
   To show this, we define
\begin{equation*}
  P_{n}(\alpha)= \left(\frac {1}{n} \sum_{i=1}^{n}i^{\alpha}\bigg/
\frac {1}{n+1}\sum_{i=1}^{n+1}i^{\alpha}\right)^{1/\alpha}.
\end{equation*}
  We recall that Bennett \cite{Be} proved that for $\alpha \geq 1$,
\begin{equation*}
  P_n(\alpha) \leq P_n(1)=\frac
   {n+1}{n+2}.
\end{equation*}
   It is easy to see that this is equivalent to 
\begin{equation*}
   \sum^{n}_{i=1}i^{\alpha} \leq \frac {n(n+1)^{2\alpha}}{(n+1)(n+2)^{\alpha}-n(n+1)^{\alpha}}.
\end{equation*}
   Thus, in order to prove \eqref{7.2}, it suffices to prove the following
\begin{equation*}
   \frac {n(n+1)^{2\alpha}}{(n+1)(n+2)^{\alpha}-n(n+1)^{\alpha}} \leq \frac {\alpha}{\alpha+1}\frac {(n+1)^{2\alpha}}{(n+2)^{\alpha}-(n+1)^{\alpha}}.
\end{equation*}
   The above inequality can be seen easily to be equivalent to the following
\begin{equation*}
  n\Big ((n+2)^{\alpha} -(n+1)^{\alpha} \Big ) \leq \alpha (n+2)^{\alpha},
\end{equation*}
   which follows easily from the mean value theorem. Thus, we have shown, as a consequence of Theorem \eqref{thm2} the following
\begin{cor}
\label{cor1}
  Fix $\alpha \geq 1$ and let $\lambda_k=k^{\alpha}$ for $k \geq 1$. Then inequality \eqref{3} holds with
\begin{equation*}
   U_N=e^{1/(\alpha+1)}- \frac {2\pi^2e^{1/(\alpha+1)}}{(\alpha+1)^2(\log N)^2}+O\Big (\frac 1{(\log N)^3} \Big ).
\end{equation*}  
\end{cor}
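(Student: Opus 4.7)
The plan is to apply Theorem \ref{thm2} with the specific constants $M = 1/(\alpha+1)$ and $C = \alpha+1$, after verifying that the sequence $\lambda_k = k^\alpha$ meets all six hypotheses \eqref{5}, \eqref{1.3}, \eqref{1.4}, \eqref{1.5}, \eqref{3.5'}, and \eqref{1.6}. The sequence is obviously non-decreasing and $\lambda_{k+1}/\lambda_k = (1+1/k)^\alpha \leq 2^\alpha$, so \eqref{1.3} is free. All remaining verifications will hinge on the two-sided bound on $\Lambda_n = \sum_{i=1}^n i^\alpha$ furnished by Lemma \ref{lem0}.

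The upper estimate in \eqref{7.1} gives $\Lambda_n/\lambda_n \leq (n+1)^{\alpha+1}/((\alpha+1)n^\alpha) = n/(\alpha+1) + O(1)$, and the left-hand estimate gives the matching lower bound, yielding \eqref{3.5'} with $C = \alpha+1$. To verify \eqref{5}, I will use $\log(1+x) \leq x$ to reduce the problem to bounding the telescoping quantity
\[
\frac{\Lambda_{n+1}}{\lambda_{n+1}} - \frac{\Lambda_n}{\lambda_n} = 1 + \Lambda_n \left(\frac{1}{(n+1)^\alpha} - \frac{1}{n^\alpha}\right),
\]
which the left-hand side of \eqref{7.1} pins at most $1/(\alpha+1)$, giving $M \leq 1/(\alpha+1)$. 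The matching lower bound, which simultaneously produces the equality $M = 1/(\alpha+1)$ and establishes \eqref{1.5}, will come from a Taylor expansion of the logarithm combined with the upper estimate on $\Lambda_n$.

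For \eqref{1.4} I will follow the reduction suggested in the passage preceding the corollary: the inequality $\log(1+x) \geq x/(1+x)$ together with $\log(\lambda_k/\lambda_{k+1}) \leq 0$ shows that it is enough to prove the simpler condition
\[
\Lambda_{k+1}/\lambda_{k+1} - \Lambda_k/\lambda_k \geq M\lambda_k/\lambda_{k+1},
\]
which will simultaneously settle \eqref{1.6}. Substituting $\lambda_k = k^\alpha$, $\Lambda_{k+1} = \Lambda_k + (k+1)^\alpha$, and relabelling $k = n+1$, a short algebraic manipulation rewrites this as the compact inequality \eqref{7.2}, with the boundary case $k = 1$ checked by direct computation.

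The main obstacle is therefore \eqref{7.2}. Here I plan to invoke Bennett's monotonicity result \cite{Be}, namely $P_n(\alpha) \leq P_n(1) = (n+1)/(n+2)$ for $\alpha \geq 1$, which translates to the upper bound $\sum_{i=1}^n i^\alpha \leq n(n+1)^{2\alpha}/\bigl((n+1)(n+2)^\alpha - n(n+1)^\alpha\bigr)$. Comparing this with the right-hand side of \eqref{7.2} reduces everything to the algebraic inequality $n\bigl((n+2)^\alpha - (n+1)^\alpha\bigr) \leq \alpha(n+2)^\alpha$, which follows at once from the mean value theorem applied to $x \mapsto x^\alpha$ on $[n+1,n+2]$. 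Once all the hypotheses of Theorem \ref{thm2} are in hand, substituting $M = 1/(\alpha+1)$ and $C = \alpha+1$ into its conclusion yields the stated asymptotic for $U_N$.
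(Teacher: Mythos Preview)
Your proposal is correct and follows essentially the same approach as the paper: verify all hypotheses of Theorem~\ref{thm2} via Lemma~\ref{lem0}, reduce \eqref{1.4} and \eqref{1.6} to the sufficient condition $\Lambda_{k+1}/\lambda_{k+1}-\Lambda_k/\lambda_k\ge M\lambda_k/\lambda_{k+1}$, rewrite it as \eqref{7.2}, and dispatch \eqref{7.2} by combining Bennett's bound with the mean value theorem. Your explicit mention of the $k=1$ boundary check and of the inequality $\log(\lambda_k/\lambda_{k+1})\le 0$ actually spells out two small steps the paper leaves implicit.
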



\end{document}